\renewcommand{\le}{\leqslant}
\renewcommand{\ge}{\geqslant}
\renewcommand{\leq}{\leqslant}
\renewcommand{\geq}{\geqslant}
\providecommand{\keywords}[1]{\textbf{\textit{Keywords:}} #1}
\newtheorem{theorem}{Theorem}
\newtheorem{lemma}{Lemma}
\author{Mikhail Fadin \footnote{michailfadin@gmail.com; Faculty of Mathematics, National Research University Higher School of Economics, Moscow, 119048, Russia 
}}
\date{}
\begin{document}
\title{Defect of an octahedron in a rational lattice}
\maketitle

\begin{abstract}
Consider an arbitrary $n$-dimensional lattice $\Lambda$ such that $\mathbb{Z}^n \subset \Lambda \subset \mathbb{Q}^n$. Such lattices are called {\it rational} and can always be obtained by adding $m \le n$ rational vectors to $\mathbb{Z}^n$. {\it Defect } $d({\cal E},\Lambda)$ of the standard basis $ {\cal E}$  of ${\mathbb Z}^n$ ($n$ unit vectors going in the directions of the coordinate axes) is defined as the smallest integer $d$ such that certain $ (n-d) $ vectors from $ {\cal E} $ together with some $d$ vectors from the lattice $\Lambda$ form a basis of  $\Lambda$.

Let $||...||$ be $L^1$-norm on $\mathbb{Q}^n$. Suppose that for each non-integer $x \in \Lambda$ inequality $||x|| > 1$ holds. Then the unit octahedron $O^n = \left\{{ x} \in \mathbb{R}^n: ||x|| \leqslant 1\right\}$ is called admissible with respect to $\Lambda$ and $d({\cal E},\Lambda)$ is also called defect of the octahedron $O^n$ with respect to $\cal{E}$ and is denoted as $d(O^n_{{\cal E}}, \Lambda)$.

Let $
d_n^m = \max_{\Lambda \in {\cal A}_m} d(O^n_{{\cal E}},\Lambda),
$
where $ {\cal A}_m $ is the set of all {\it rational} lattices that can be obtained by adding $m$ rational vectors to $\mathbb{Z}^n$:
$
\Lambda = \left \langle {\mathbb Z}^n, { a}_1, \dots, { a}_m \right \rangle_{{\mathbb Z}}, 
{ a}_1, \dots, { a}_m \in {\mathbb Q}^n.
$ In this article we show that there exists an absolute positive constant $ C $ such that for any $m < n $
$$
d_n^m \leq C \frac{n \ln (m+1)}{\ln \frac{n}{m}} \left(\ln\ln \left(\frac{n}{m}\right)^m \right)^2
$$ 

This bound was also claimed in $[1],[2]$, however the proof was incorrect. In this article along with giving correct proof we highlight substantial inaccuracies in those articles.

\end{abstract}

\keywords{Lattice, Defect, Octahedron, System of common representatives}

\section{Definitions, notation and formulation of result}

Let $ \Gamma \subset \mathbb{R}^n $ be an arbitrary lattice in an $n$-dimensional Euclidean space, and let $O = (0,0, \dots, 0) \in \Gamma $ be the  point of origin. If $ \Gamma $ is a sublattice of a lattice $ \Lambda $, then $ \Lambda $ is called
a {\it centering} of the lattice $ \Gamma $. We are going to investigate the difference between the basis of a lattice and the basis of its centering.

Let us consider a basis $ {e}_1, \dots, { e}_n $ of $ \Gamma $. The set of vectors $ {\cal E} = \left\{{ e}_1, \dots, { e}_n\right\} $ will be called a {\it frame}. The {\it defect of the frame $ {\cal E} $ with respect to the lattice $\Lambda$} is defined as the smallest integer $d$ such that certain $ (n-d) $ vectors from $ {\cal E} $ together with some $d$ vectors from the lattice $\Lambda$ form a basis of  $\Lambda$. It is denoted as  $d({\cal E},\Lambda)=d$.

An {\it octahedron} corresponding to the frame $ {\cal E} $ is defined as the set
$$
O^n_{{\cal E}} = \left\{{ x} \in \mathbb{R}^n: ~ { x} = \lambda_1 { e}_1+\ldots+\lambda_n { e}_n; ~
|\lambda_1|+\ldots+|\lambda_n| \leqslant 1\right\}.
$$

 The octahedron $ O^n_{{\cal E}} $ is called {\it admissible} with respect to the lattice $\Lambda$ if its interior contains no points of the lattice $\Lambda$, except for $ O $  and $\pm { e}_i$:
$$
O^n_{{\cal E}} \cap \Lambda = \{O, { e}_1,-{ e}_1, \dots, { e}_n,-{ e}_n\}.
$$

If the octahedron $O^n_{{\cal E}}$ corresponding to the frame $ {\cal E} $ is admissible with respect to the centering $\Lambda$, then the quantity $d({\cal E},\Lambda)$
is denoted as $d(O^n_{{\cal E}},\Lambda)$ and is called the {\it defect of the admissible octahedron $ O^n_{{\cal E}} $ in the lattice $ \Lambda $}.

Note that without loss of generality we can take $ \Gamma $ to be $ {\mathbb Z}^n $ and the frame $ {\cal E} $ to represent the standard basis ($n$ unit vectors going in the directions of the coordinate axes).\\

In \cite{Mosh} N.G. Moshchevitin introduced the quantity  $$d_n^{*} = \max_{\Lambda_{a} } d(O^n_{{\cal E}},\Lambda_{a}), $$ where $\Lambda_a$ runs through lattices that can be obtained by adding one rational vector to $\mathbb{Z}^{n}$, and proved that there exists a positive constant $C$ such that $$ d_n^{*} \leq C \frac{n}{\ln n} \left(\ln\ln n \right)^2. $$

Then, in the article \cite{Rai1} (see also [8], [10], [11]) A.M Raigorodskii proved that there exists a positive constant $C$ such that 
$$ C \frac{n}{\ln n} \left(\ln\ln n \right)^2 \leq d_n^{*}$$

Finally, in the article \cite{BR} the quantity $d_n^m$ --- natural generalisation of $d_n^{*}$, was introduced:
$$
d_n^m = \max_{\Lambda \in {\cal A}_m} d(O^n_{{\cal E}},\Lambda),
$$
where $ {\cal A}_m $ is the set of all centerings of the integer lattice $ {\mathbb Z}^n $ that can be obtained by adding $m$ rational vectors:
$$
\Lambda = \left \langle {\mathbb Z}^n, { a}_1, \dots, { a}_m \right \rangle_{{\mathbb Z}}, ~~~
{ a}_1, \dots, { a}_m \in {\mathbb Q}^n.
$$
In \cite{BR}, \cite{BRA} the following bound was claimed. 

\begin{theorem}\label{M}

There exists an absolute positive constant $ C $ such that
$$
d_n^m \leq C \frac{n \ln (m+1)}{\ln \frac{n}{m}} \left(\ln\ln \left(\frac{n}{m}\right)^m \right)^2
$$ 

for any $m < n $.
\end{theorem}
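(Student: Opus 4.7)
The plan is to reduce the defect to a combinatorial hitting-set problem on a hypergraph encoding the non-integer structure of $\Lambda$, use admissibility to force its edges to be large, and then bound the hitting-set number by a system-of-common-representatives (SCR) argument extending the $m=1$ treatment of \cite{Mosh,Rai1} to $m$ generators.

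I would begin with the reformulation $d(O^n_{\mathcal{E}},\Lambda)=\tau(\mathcal{H})$, where $\mathcal{H}$ is the hypergraph on $[n]$ whose edges are the minimal supports of non-integer vectors of $\Lambda$. This is equivalent by the following chain: $d(O^n_{\mathcal{E}},\Lambda)\le d$ iff some $T\subseteq[n]$ with $|T|=n-d$ admits $d$ vectors of $\Lambda$ completing $\{e_i:i\in T\}$ to a basis, iff the quotient $\Lambda/\langle e_i:i\in T\rangle$ is torsion-free (necessarily of rank $d$), iff no non-integer element of $\Lambda$ is supported inside $T$, iff the complement $S=[n]\setminus T$ meets every minimal non-integer support. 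The defect is then precisely the minimum size of a transversal for $\mathcal{H}$.

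Next, I would lower-bound the edge sizes of $\mathcal{H}$. For a minimal support $M$, choose a reduced representative $x\in\Lambda\cap(-1/2,1/2]^n$ of denominator $q$ supported exactly on $M$; minimality of $M$ forces every reduction $kx\bmod\mathbb{Z}^n$ with $\gcd(k,q)=1$ to again be non-integer and supported on $M$. The $\varphi(q)$ admissibility inequalities $\|kx\|>1$, combined with standard estimates for $\sum_{k}\|kp_i/q\|$ and the structural fact that $\Lambda/\mathbb{Z}^n$ has at most $m$ invariant factors (which restricts the prime-power components of $q$), yield the effective lower bound on $|M|$ needed in the SCR step; the relevant scale is $\ln(n/m)$ up to double-logarithmic losses that will be absorbed in the final bound.

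The final step is to bound $\tau(\mathcal{H})$ itself. I would partition $\mathcal{H}$ into at most $m$ subfamilies $\mathcal{H}_1,\dots,\mathcal{H}_m$ according to which invariant factor of $\Lambda/\mathbb{Z}^n$ the certificate of a given edge detects, and run a simultaneous probabilistic SCR---each coordinate included independently with a probability tuned against the subfamily sizes, followed by an alteration step repairing untouched subfamilies---to produce a hitting set of size $O\!\left(\frac{n\ln(m+1)}{\ln(n/m)}\bigl(\ln\ln(n/m)^m\bigr)^2\right)$. The factor $\ln(m+1)$ rather than $m$ is the usual set-cover saving over the naive per-subfamily union bound, while the squared iterated logarithm comes from two nested balancing steps between edge size and edge count inside the SCR. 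The main obstacle is precisely this last step: extending the single-vector SCR of \cite{Mosh,Rai1} to the multi-vector setting requires a joint analysis of the subfamilies $\mathcal{H}_j$ rather than a union bound, and the correct bookkeeping of the two nested logarithms is where the arguments of \cite{BR,BRA} are claimed to slip.
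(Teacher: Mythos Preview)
Your high-level plan---reduce the defect to a transversal problem and bound it via an SCR argument---matches the paper's, but the execution diverges in ways that leave real gaps. The paper does \emph{not} work with the hypergraph of minimal non-integer supports: that hypergraph can have very small edges (a prime $p_j=2$ or $3$ dividing the denominator produces non-integer vectors with support of size $3$ or $4$), so your blanket edge-size lower bound ``of scale $\ln(n/m)$'' is false as stated. Instead the paper first proves a separate reduction (Theorem~\ref{eq}) to the case where the common denominator $q$ is \emph{square-free}---a step you omit entirely, and precisely the point where \cite{BR,BRA} went wrong by trying to work over rings $\mathbb{Z}_{p^k}$. Only then, for each prime $p_j\mid q$, does it build a family $\mathcal{M}_j$ of maximal linearly independent column-index sets over the \emph{field} $\mathbb{Z}_{p_j}$, and bound $d(\mathcal{E},\Lambda)$ by the size of a set $M$ containing one member of every $\mathcal{M}_j$ (Theorem~\ref{d}). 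Small primes $p_j<n/m$ are handled separately by a crude union and the prime number theorem.

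The factor $\ln(m+1)$ does not arise from any ``usual set-cover saving'' over a partition into $m$ invariant-factor subfamilies; it comes from an \emph{iterative} SCR run for $m$ rounds. In round $l$ one selects, via Theorem~\ref{sop}, one further independent column index for every large prime, and the key Lemma~\ref{m} (proved by applying Minkowski's theorem to a carefully constructed sublattice) shows the pool of candidates has size at least $\tfrac{1}{2}\ln p_j^{\,rank_j-l}/\ln\ln p_j^{\,rank_j-l}$. Summing the resulting SCR bounds over rounds produces $\sum_{r=1}^{m}1/r\sim\ln(m+1)$, which is the actual source of that factor. Your ``partition by invariant factor plus simultaneous probabilistic SCR'' does not supply this hierarchical structure, and your remark that \cite{BR,BRA} merely ``slip'' on bookkeeping understates the problem: their version of Lemma~\ref{m} (with $m$ in place of $rank_j$) is outright false, and replacing it---together with the square-free reduction that makes the linear-algebra step over $\mathbb{Z}_{p_j}$ legitimate---is the technical heart of the corrected proof.
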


However, the proof contained several substantial inaccuracies. Eliminating those inaccuracies turned out to be quite challenging. In this article we are going to show the correct proof of this bound and mark substantial inaccuracies in \cite{BR}, \cite{BRA}. In order to do it we define the following quantity: $$
{\cal D}_n^m = \max_{\Lambda \in {\cal A}^*_m} d(O^n_{{\cal E}},\Lambda),
$$
where $ {\cal A}^*_m $ is the set of all centerings of the integer lattice $ {\mathbb Z}^n $ that can be obtained by adding  $m$ rational vectors whose coordinates' denominators are square-free:
$
\Lambda = \left \langle {\mathbb Z}^n, { a}_1, \dots, { a}_m \right \rangle_{{\mathbb Z}}; 
{ a}_1, \dots, { a}_m \in {\mathbb Q}^n,$ there exists a square-free positive integer $q$ such that $q\cdot a_1, \dots,q \cdot a_m \in {\mathbb Z}^n$.

\begin{theorem}\label{B}
There exists an absolute positive constant $ C $ such that
$$
{\cal D}_n^m \leq C \frac{n \ln (m+1)}{\ln \frac{n}{m}} \left(\ln\ln \left(\frac{n}{m}\right)^m \right)^2
$$
for any $m < n$.
\end{theorem}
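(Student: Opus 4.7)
The plan is to translate the defect into a combinatorial covering (hitting set) problem, split that problem across primes using the square-free hypothesis, and combine the per-prime bounds via a system of common representatives (SCR). For any rational lattice $\Lambda$ with $\mathbb{Z}^n \subset \Lambda \subset \mathbb{Q}^n$, the set $\{e_i\}_{i \in I}$ extends to a basis of $\Lambda$ exactly when the sublattice $\langle e_i\rangle_{i \in I}$ is primitive in $\Lambda$, which happens iff no non-trivial coset of $G := \Lambda/\mathbb{Z}^n$ has a representative supported in $I$. Writing $S_\gamma \subset \{1,\dots,n\}$ for the set of coordinates on which $\gamma \in G$ has non-zero fractional part, one gets
$$d({\cal E}, \Lambda) \;=\; \min\{|J|\,:\, J \subset \{1,\dots,n\},\ J \cap S_\gamma \ne \emptyset \text{ for every } \gamma \ne 0\}.$$
Admissibility of $O^n_{\cal E}$ rephrases as $\sum_j \min(\{\gamma\}_j,\, 1 - \{\gamma\}_j) > 1$ for every non-zero $\gamma$, which supplies a weighted lower bound on $|S_\gamma|$.

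Because the common denominator $q$ is square-free, factor $q = p_1 \cdots p_k$ and apply the Chinese Remainder Theorem to obtain $G \simeq \bigoplus_{i=1}^{k} V_{p_i}$ with $V_{p_i}$ an $\mathbb{F}_{p_i}$-vector space of dimension $m_i \le m$. Since the $p_i$ are pairwise coprime, the support of $\gamma = \gamma_1 + \dots + \gamma_k$ satisfies $S_\gamma = \bigcup_i S_{\gamma_i}$, so a set $J$ is a transversal for $\{S_\gamma : \gamma \ne 0\}$ iff it is simultaneously a transversal for each per-prime family $\{S_v : v \in V_{p_i} \setminus 0\}$, $i = 1, \dots, k$. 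For each prime $p_i$, the admissibility-derived $L^1$-weighted constraints together with a Moshchevitin--Raigorodskii-style counting argument for the $m_i$-generated prime-denominator case yield a per-prime transversal bound of shape $C\,n \ln(m_i + 1)/\ln(n/m_i) \cdot (\ln\ln(n/m_i)^{m_i})^2$.

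The final step fuses these per-prime solutions into a single transversal without paying a factor of $k$ (which could be as large as $\log q$). This is achieved by a system-of-common-representatives construction: partition $\{1,\dots,n\}$ into blocks calibrated to the joint structure of the $V_{p_i}$'s, respecting the admissibility weights $1/p_i$ of every prime simultaneously, and extract one transversal whose size reflects the total effective dimension $\sum_i m_i$ (controlled by $m$ after an efficient choice of generators) rather than the number of primes. The main obstacle --- and precisely where \cite{BR, BRA} have gaps --- is this fusion: the per-prime weights and the minimal supports $S_v$ vary across primes, so designing a common block decomposition that preserves every weighted lower bound, and extracting from it a transversal of size $C n \ln(m+1)/\ln(n/m) \cdot (\ln\ln(n/m)^m)^2$, requires a multi-partition SCR estimate whose parameters depend jointly on $m$ and $n/m$. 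Once this combinatorial core is correctly established, the hitting-set characterization of $d({\cal E}, \Lambda)$ immediately converts the transversal bound into the statement of Theorem~\ref{B}.
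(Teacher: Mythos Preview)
Your hitting-set reformulation and the CRT decomposition $G \simeq \bigoplus_i V_{p_i}$ are correct and line up with the paper's setup (Theorem~\ref{d} is exactly the statement that $d({\cal E},\Lambda)\le |J|$ whenever $J$ contains an $\mathbb{F}_{p_j}$-column-basis of $A$ for every $j$, which is equivalent to your transversal condition). The gap is that neither of your two announced steps is actually carried out. For a single prime $p_i$ the minimal transversal of $\{S_v : v\in V_{p_i}\setminus 0\}$ has size exactly $m_i = \operatorname{rank}_{\mathbb{F}_{p_i}}A$, so the ``per-prime transversal bound'' you quote is either vacuous or refers to some unstated structured object; in any case nothing is proved there. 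The ``multi-partition SCR estimate'' you invoke for the fusion is never formulated, and your sketch gives no mechanism that would produce the quantities $\ln(n/m)$ and $(\ln\ln(n/m)^m)^2$ appearing in the bound.

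The paper does \emph{not} proceed by ``solve per-prime, then fuse''. Its engine is Lemma~\ref{m}: for each prime $p_j\ge 5$ and each partial choice of $l<rank_j$ independent columns over $\mathbb{F}_{p_j}$, the octahedron is still admissible in the sublattice supported on the remaining ``extendable'' coordinates, and Minkowski's theorem then forces at least $\tfrac12\,\ln(p_j^{rank_j-l})/\ln\ln(p_j^{rank_j-l})$ further independent columns. This geometric input is fed into an \emph{iterative} SCR (Theorem~\ref{sop}) run simultaneously over all primes $p_j\ge n/m$: at round $l$ one takes a small SCR of the current families $L_j$, adds it to $M$, and repeats $m$ times; primes $p_j<n/m$ are handled trivially by adjoining one $M_j^1$ each and bounding their number via the prime-counting theorem. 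Summing the per-round cost $O\!\big(\frac{n}{\ln(n/m)^r}(\ln\ln(n/m)^m)^2\big)$ over $r=1,\dots,m$ is what manufactures the $\ln(m+1)$ factor. None of this structure---in particular the Minkowski-on-sublattices step that turns admissibility into the quantitative column-count driving the iteration---appears in your proposal.
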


\begin{theorem}\label{eq}

${\cal D}_n^m = d_n^m$.
\end{theorem}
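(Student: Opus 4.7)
The inclusion ${\cal A}_m^* \subseteq {\cal A}_m$ gives ${\cal D}_n^m \leq d_n^m$ for free, so the task is to establish the opposite inequality. My plan is to associate to every $\Lambda \in {\cal A}_m$ with admissible octahedron a lattice $\Lambda^* \in {\cal A}_m^*$ satisfying $\mathbb{Z}^n \subseteq \Lambda^* \subseteq \Lambda$ and $d({\cal E},\Lambda^*) \geq d({\cal E}, \Lambda)$; admissibility of $O^n_{{\cal E}}$ with respect to $\Lambda^*$ is then immediate from $\Lambda^* \subseteq \Lambda$. As preparation I would record the combinatorial reformulation
\[
d({\cal E}, \Lambda) \;=\; n \,-\, \max\{\,|I|\,:\,I \subseteq \{1,\ldots,n\},\ \Lambda\cap \mathbb{R}^I = \mathbb{Z}^I\,\},
\]
obtained by observing that $\{e_i : i \in I\}$ extends to a $\mathbb{Z}$-basis of $\Lambda$ exactly when $\Lambda \cap \mathbb{R}^I = \mathbb{Z}^I$ (one then lifts any basis of the rank-$(n-|I|)$ lattice $\pi_{I^c}(\Lambda)$).

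For the construction, let $q$ be a common denominator of $a_1,\ldots,a_m$ and $r$ its square-free radical. Set
\[
\Lambda^* \,=\, \{\, x \in \Lambda\ :\ r\, x \in \mathbb{Z}^n\,\}.
\]
The quotient $\Lambda^*/\mathbb{Z}^n$ is the $r$-torsion of the finite abelian group $G := \Lambda/\mathbb{Z}^n$, hence has square-free exponent dividing $r$. By primary decomposition $G = \prod_p G_p$ and the equality of $p$-ranks of $G_p$ and $G_p[p]$, the minimum number of generators of $G[r] = \prod_p G_p[p]$ coincides with that of $G$ and is therefore at most $m$. Lifting $m$ such generators writes $\Lambda^* = \langle \mathbb{Z}^n, a_1^*,\ldots,a_m^*\rangle_{\mathbb{Z}}$ with every $r\, a_i^* \in \mathbb{Z}^n$, so $\Lambda^* \in {\cal A}_m^*$.

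The main obstacle is the inequality $d({\cal E},\Lambda^*) \geq d({\cal E}, \Lambda)$. By the reformulation above it reduces to the implication: if $\Lambda\cap \mathbb{R}^I \supsetneq \mathbb{Z}^I$, then $\Lambda^*\cap \mathbb{R}^I \supsetneq \mathbb{Z}^I$. Given $x \in (\Lambda\cap \mathbb{R}^I)\setminus \mathbb{Z}^n$, the strategy is a prime-by-prime split modulo $\mathbb{Z}^n$: by partial fractions, write $x \equiv \sum_{p\mid q} x_p \pmod{\mathbb{Z}^n}$, with $x_p$ of denominator a power of $p$. The key observation that a sum of fractions with pairwise coprime prime-power denominators is an integer only when each summand is itself an integer lets me arrange each $x_p \in \mathbb{R}^I$ as well. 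A short Bézout argument combining $q\, x_p \in \mathbb{Z}^n$ with $(q/p^{v_p(q)})\, x_p \in \Lambda$ (both having $\gcd$ equal to $1$) then forces $x_p \in \Lambda$ for every $p$. Finally, pick a prime $p_0$ with $x_{p_0} \notin \mathbb{Z}^n$ (one exists because $x \notin \mathbb{Z}^n$), and set $y := p_0^{v_{p_0}(q)-1}\, x_{p_0}$; then $r\, y \in \mathbb{Z}^n$ and $y \in (\Lambda \cap \mathbb{R}^I)\setminus \mathbb{Z}^n$, so $y$ certifies $\Lambda^* \cap \mathbb{R}^I \supsetneq \mathbb{Z}^I$, as required.
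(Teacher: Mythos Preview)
Your plan is essentially correct, with two easily-repaired slips. In the B\'ezout step you need coprime multipliers, so pair $(q/p^{v_p(q)})\,x_p\in\Lambda$ with $p^{v_p(q)}\,x_p\in\mathbb{Z}^n$ rather than with $q\,x_p\in\mathbb{Z}^n$ (the latter's gcd with $q/p^{v_p(q)}$ is $q/p^{v_p(q)}$ and yields nothing). In the last step, $y=p_0^{v_{p_0}(q)-1}x_{p_0}$ may already lie in $\mathbb{Z}^n$ if the true $p_0$-adic denominator of $x_{p_0}$ is strictly smaller than $p_0^{v_{p_0}(q)}$; replace $v_{p_0}(q)$ by the exponent $e$ of the actual order $p_0^{e}$ of $x_{p_0}$ in $\Lambda/\mathbb{Z}^n$ and set $y=p_0^{\,e-1}x_{p_0}$.

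Your route differs from the paper's. The paper does not pass to the full $r$-torsion sublattice; instead it fixes $d({\cal E},\Lambda)=n-k+1$, chooses for every $k$-subset $I$ a witness $x_I\in(\Lambda\cap\mathbb{R}^I)\setminus\mathbb{Z}^I$, scales it to prime denominator via $u_I x_I$ with $u_I=q_I/p_I$ ($q_I$ the lcm of denominators, $p_I$ its least prime factor), and lets $\Lambda'=\langle\mathbb{Z}^n,\{u_I x_I\}_I\rangle$. Membership $\Lambda'\in{\cal A}_m^*$ then comes from a separate lemma: \emph{every} intermediate lattice $\mathbb{Z}^n\subseteq\Lambda'\subseteq\Lambda$ already lies in ${\cal A}_m$, proved by the structure theorem for submodules of $\mathbb{Z}^m$. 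Your construction is the canonical (maximal) square-free sublattice, and your generator count via equality of $p$-ranks of $G$ and $G[r]$ is a clean substitute for that lemma. Conversely, your partial-fraction and B\'ezout machinery is heavier than necessary: the paper's scaling trick shows directly that for any $x\in(\Lambda\cap\mathbb{R}^I)\setminus\mathbb{Z}^n$ of order $k$ in $\Lambda/\mathbb{Z}^n$ and any prime $p\mid k$, the vector $(k/p)\,x$ already lies in $\Lambda^*\cap\mathbb{R}^I\setminus\mathbb{Z}^n$, which is all you need.
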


Note that Theorem \ref{M} is a direct implication of Theorems \ref{B} and \ref{eq}.

\section{Proof of Theorem \ref{eq}}

Let $ a_1, \dots,  a_m \in {\mathbb Q}^n $ be given vectors. Suppose $ O^n_{{\cal E}} $ is admissible with respect to $\Lambda = \left \langle \mathbb{Z}^n, a_1, \ldots, a_m \right \rangle_{\mathbb{Z}}$. Define $A^{*}$ as the matrix formed by writing vectors  $a_1, \ldots, a_m$ as its  rows.

\begin{lemma} Let $\Lambda'$ be a sublattice of $\Lambda$ such that $\mathbb{Z}^n \subset \Lambda'$. Then there exist $\lambda_1, \ldots, \lambda_{m} \in {\mathbb Q}^n$ such that $$\Lambda' = \left \langle \mathbb{Z}^n, \lambda_1,\ldots, \lambda_m \right \rangle_{\mathbb{Z}}.$$

\end{lemma}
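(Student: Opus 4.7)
The plan is to pass to the quotient $G = \Lambda/\mathbb{Z}^{n}$ and argue purely at the level of finite abelian groups. Since every $a_{i} \in \mathbb{Q}^{n}$, there is an integer $N_{i}$ with $N_{i} a_{i} \in \mathbb{Z}^{n}$, so each coset $\bar a_{i} \in G$ has finite order; hence $G$ is a finite abelian group generated by the $m$ elements $\bar a_{1},\dots,\bar a_{m}$. The sublattice $\Lambda'$ with $\mathbb{Z}^{n}\subset\Lambda'\subset\Lambda$ corresponds, by the lattice isomorphism theorem, to a subgroup $H = \Lambda'/\mathbb{Z}^{n} \leqslant G$. If I can produce at most $m$ generators of $H$, I simply lift them to $\lambda_{1},\dots,\lambda_{m}\in \Lambda'\subset\mathbb{Q}^{n}$ and conclude $\Lambda' = \langle \mathbb{Z}^{n},\lambda_{1},\dots,\lambda_{m}\rangle_{\mathbb{Z}}$ (padding with zeros if $H$ needs fewer generators).

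So the task reduces to the following classical fact, which is the only non-formal step: if a finite abelian group $G$ is generated by $m$ elements, then every subgroup $H\leqslant G$ is also generated by at most $m$ elements. I would derive this from the primary decomposition. Writing the minimal number of generators as $d(G)=\max_{p\ \text{prime}} r_{p}(G)$, where $r_{p}(G)=\dim_{\mathbb{F}_{p}}(G[p])$ is the $p$-rank (the dimension of the $p$-torsion subgroup $G[p]=\{x\in G: px=0\}$ viewed as an $\mathbb{F}_{p}$-vector space), one observes that $H[p]\subset G[p]$, so $r_{p}(H)\leqslant r_{p}(G)$ for every prime $p$, whence $d(H)\leqslant d(G)\leqslant m$.

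Assembling the two pieces: pick $\bar\lambda_{1},\dots,\bar\lambda_{m}\in H$ generating $H$ (pad with the identity if needed), lift each $\bar\lambda_{i}$ to some $\lambda_{i}\in\Lambda'$, and observe that $\mathbb{Z}^{n}$ together with $\lambda_{1},\dots,\lambda_{m}$ generate $\Lambda'$ over $\mathbb{Z}$: any $x\in\Lambda'$ reduces mod $\mathbb{Z}^{n}$ to some integer combination of the $\bar\lambda_{i}$'s, and subtracting the corresponding combination of the $\lambda_{i}$'s leaves an element of $\mathbb{Z}^{n}$.

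The only place where real work happens is the subgroup-generation lemma for finite abelian groups; everything else is bookkeeping. Since this is a textbook consequence of the structure theorem (or of Smith normal form applied to a presentation matrix of $G$), I expect the proof in the paper to dispatch it in a line or two, perhaps stating the $p$-rank bound directly. No estimate on how close $\Lambda'$ sits to $\mathbb{Z}^{n}$ is required, which is why the index/denominators of the $a_{i}$'s play no role in the argument.
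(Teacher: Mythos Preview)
Your proof is correct and takes a somewhat different route from the paper's. The paper does not pass to the finite quotient $\Lambda/\mathbb{Z}^{n}$; instead it lifts to the coefficient module $\mathbb{Z}^{m}$. Writing $A^{*}$ for the $m\times n$ matrix with rows $a_{1},\dots,a_{m}$, the paper shifts each of the $n$ basis vectors $b_{i}$ of $\Lambda'$ by an integer vector so that it lands in $\langle a_{1},\dots,a_{m}\rangle_{\mathbb{Z}}$, i.e.\ equals $c_{i}A^{*}$ for some row $c_{i}\in\mathbb{Z}^{m}$; then it invokes the fact that the submodule $C=\langle c_{1},\dots,c_{n}\rangle\subset\mathbb{Z}^{m}$ is free of rank at most $m$ (submodules of a free module over a PID), takes a generating set $c_{1}',\dots,c_{m}'$ of $C$, and sets $\lambda_{i}=c_{i}'A^{*}$. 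Both arguments ultimately rest on the structure theory over $\mathbb{Z}$ --- yours via the $p$-rank inequality $r_{p}(H)\le r_{p}(G)$ for subgroups of finite abelian groups, the paper's via the rank bound for submodules of $\mathbb{Z}^{m}$. Your version is a bit more conceptual and stays entirely inside the quotient; the paper's is more explicit, since it actually exhibits each $\lambda_{i}$ as a concrete integer combination of the original $a_{j}$.
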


\begin{proof}
Let $b_1, \ldots, b_n$ be a basis of $\Lambda'$. Obviously,  $\Lambda' = \left \langle \mathbb{Z}^n, b_1, \ldots, b_n \right \rangle_{\mathbb{Z}}$. For each $i$ there exist $b_i^{*} \in \mathbb{Z}^n$ such that $b_i' = b_i + b_i^{*} \in  \left \langle a_1, \ldots, a_m \right \rangle_{\mathbb{Z}}$. We have $\Lambda' = \left \langle \mathbb{Z}^n, b_1', \ldots,b_n' \right \rangle_{\mathbb{Z}} =  \left \langle \mathbb{Z}^n, c_1A^{*}, \ldots, c_nA^{*} \right \rangle_{\mathbb{Z}}$, where  for each $i$, $c_i$ is a row of $m$ integers.\\\

Let $C = \left \langle c_1, \ldots, c_n \right \rangle_{\mathbb{Z}} \subset \mathbb{Z}^m$. $C$ is a submodule of free module $\mathbb{Z}^m$ of rank $m$ over principle ring $\mathbb{Z}$. Thus $C$ is a free module over $\mathbb{Z}$  of rank $\le m$, which means that there exist  $c_1', \ldots, c_m'$ such that $C = \left \langle c_1', \ldots , c_m' \right \rangle_{\mathbb{Z}}$. Clearly, $\Lambda' =  \left \langle \mathbb{Z}^n, c_1A^{*}, \ldots, c_nA^{*} \right \rangle_{\mathbb{Z}} = \left \langle \mathbb{Z}^n, c_1'A^{*}, \ldots, c_m'A^{*} \right \rangle_{\mathbb{Z}} $. Thus, $\lambda_i = c_i'A^{*}$ are desired vectors.

\end{proof}

\begin{lemma} There exist a lattice $\Lambda'$ such that\\

1) $\mathbb{Z}^n \subset \Lambda' \subset \Lambda$,\\ 

2) denominators of coordinates of all vectors of $\Lambda'$ are square-free,\\

3) $d({\cal E}, \Lambda') = d({\cal E}, \Lambda)$.

\end{lemma}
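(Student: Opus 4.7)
The plan is to define $\Lambda'$ as the preimage in $\Lambda$ of the largest subgroup of $G := \Lambda/\mathbb{Z}^n$ having square-free exponent. Concretely, let $Q$ denote the exponent of the finite abelian group $G$, let $q = \operatorname{rad}(Q)$ be its radical (the product of the distinct prime divisors of $Q$), and set
\[
\Lambda' = \{\,x \in \Lambda : q x \in \mathbb{Z}^n\,\}.
\]
Then $\mathbb{Z}^n \subset \Lambda' \subset \Lambda$ and $q\Lambda' \subset \mathbb{Z}^n$ with $q$ square-free, so every coordinate of every vector of $\Lambda'$ has square-free denominator after reduction, giving properties (1) and (2) at once.

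For (3), I would first establish the combinatorial characterization that for any rational lattice $M$ with $\mathbb{Z}^n \subset M$,
\[
d({\cal E}, M) \;=\; n - \max\{\,|S| : S \subset \{1,\dots,n\},\ M \cap \mathbb{Q}^S = \mathbb{Z}^S\,\},
\]
where $\mathbb{Q}^S$ denotes the coordinate subspace indexed by $S$. This is a direct unwinding of the definition of the defect: given such $S$ with complement $T$ of size $d$, the projection $\pi_T(M)$ contains $\mathbb{Z}^T$ and so has rank $d$; lifting any $\mathbb{Z}$-basis of $\pi_T(M)$ gives vectors $v_1,\dots,v_d \in M$, and the condition $M \cap \mathbb{Q}^S = \mathbb{Z}^S$ is precisely what makes $\{e_i : i \in S\} \cup \{v_1,\dots,v_d\}$ a basis of $M$. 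Conversely, if those $n$ vectors form a basis, then any element of $M \cap \mathbb{Q}^S$ is forced by its integer expansion to lie in $\langle e_i : i \in S\rangle_{\mathbb Z} = \mathbb{Z}^S$.

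Given this characterization, $d({\cal E},\Lambda') \le d({\cal E},\Lambda)$ is automatic from $\Lambda' \subset \Lambda$: every $S$ that works for $\Lambda$ also works for $\Lambda'$. The reverse inequality is the heart of the lemma and I would prove it by contraposition. Suppose $S$ satisfies $\Lambda' \cap \mathbb{Q}^S = \mathbb{Z}^S$ but there exists $x \in \Lambda \cap \mathbb{Q}^S$ with $\bar x := x + \mathbb{Z}^n$ of order $d > 1$. Pick any prime $p \mid d$ and set $z = (d/p)\,x$. Then $\bar z = (d/p)\,\bar x$ has order $d/\gcd(d,d/p) = p$, so $pz \in \mathbb{Z}^n$, hence $qz \in \mathbb{Z}^n$, which places $z \in \Lambda'$. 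Since also $z \in \mathbb{Q}^S$ and $z \notin \mathbb{Z}^n$, this contradicts $\Lambda' \cap \mathbb{Q}^S = \mathbb{Z}^S$.

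The main obstacle — the one substantive input — is the observation that multiplying by $d/p$ produces an element of $G$ of order exactly $p$; this is the Chinese-Remainder-style extraction of the $p$-primary component, and it is what guarantees that cutting down to elements annihilated by the square-free integer $q$ does not destroy any of the coordinate supports on which $\Lambda$ exhibits nontrivial rational behaviour. The rest of the argument — the combinatorial reformulation of the defect and the containments $\mathbb{Z}^n \subset \Lambda' \subset \Lambda$ with $q\Lambda' \subset \mathbb{Z}^n$ — is routine bookkeeping.
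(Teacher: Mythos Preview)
Your proof is correct and takes a genuinely different route from the paper's. The paper constructs $\Lambda'$ ad hoc: writing $d({\cal E},\Lambda)=n-k+1$, for every $k$-subset $I\subset\{1,\dots,n\}$ it picks a witness $x_I\in\Lambda\cap\mathbb{Q}^I\setminus\mathbb{Z}^I$, scales it by $q_I/p_I$ (with $q_I$ the lcm of its denominators and $p_I$ its least prime factor) to force prime denominator, and then sets $\Lambda'=\langle\mathbb{Z}^n,\{u_Ix_I\}_I\rangle_{\mathbb Z}$. You instead take the canonical sublattice $\Lambda'=\{x\in\Lambda:qx\in\mathbb{Z}^n\}$ with $q=\operatorname{rad}(\exp(\Lambda/\mathbb{Z}^n))$, and verify equality of defects via the clean characterization $d({\cal E},M)=n-\max\{|S|:M\cap\mathbb{Q}^S=\mathbb{Z}^S\}$. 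Both arguments rest on the same elementary fact---from any non-integer $x$ supported on $S$ one extracts, by an integer multiple, a non-integer element of prime order in $\Lambda/\mathbb{Z}^n$ still supported on $S$---but your formulation avoids the enumeration over all $k$-subsets, yields a $\Lambda'$ that is independent of choices, and makes the equality $d({\cal E},\Lambda')=d({\cal E},\Lambda)$ a direct consequence of the coincidence of ``good'' coordinate sets $S$ for $\Lambda$ and $\Lambda'$. The paper's construction is more concrete but bulkier; yours is shorter and more conceptual.
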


\begin{proof}

Let $d({\cal E}, \Lambda) = n - k + 1$. Then for each  $I = \{ i_1, \ldots, i_k \} \subset \{ 1, \ldots, n \}$ coordinate vectors $e_{i_1}, \ldots, e_{i_k}$ can not be completed to a basis of $\Lambda$, which means that there exists $x = x_{I} \in \Lambda$ such that \\\ 

(*) $x  \in \left \langle e_{i_1}, \ldots, e_{i_k} \right \rangle_{\mathbb{R}}$, but $x \notin \left \langle e_{i_1}, \ldots, e_{i_k} \right \rangle_{\mathbb{Z}}.$ \\\

Let $q_{I}$ be the least common multiple of the denominators of the coordinates of $x_{I}$ and let $p_{I}$ be the smallest prime divisor of $q_{I}, u_{I}  = \frac{q_{I}}{p_{I}}$. Then $u_{I}x_{I}$ also satisfies (*) and its coordinates' denominators are square-free.

Let $\Lambda' = \left \langle \mathbb{Z}^n, \{u_{I_j}x_{I_j} \} \right \rangle_{{\mathbb{Z}}}$, where $I_j$ runs through all $k$-element subsets of  $\{ 1, \ldots, n \}$. Obviously, $\Lambda'$ satisfies 1) and 2). Since for each  $I = \{ i_1, \dots, i_k \} \subset \{ 1, \ldots, n \}$ there exists $y_{I} = u_{I}x_{I} \in \Lambda'$ which satisfies (*), $e_{i_1}, \ldots, e_{i_k}$ can not be completed to a basis of $\Lambda'$. Thus $d({\cal E}, \Lambda') \ge n - k + 1 = d({\cal E}, \Lambda)$. But since $\Lambda' \subset \Lambda$, we have $d({\cal E}, \Lambda') \le d({\cal E}, \Lambda)$. Therefore, $d({\cal E}, \Lambda') = d({\cal E}, \Lambda)$  as desired.\\\

\end{proof}

{ Theorem \ref{eq} directly follows from Lemma 1 and Lemma 2.}

\section{Auxiliary combinatorial constructions}

\subsection{A system of families of sets  $ \mathfrak{M} $}

Let $ a_1, \dots,  a_m \in {\mathbb Q}^n $ be given vectors. Let us reduce their coordinates to a least possible common denominator $q$. Due to Theorem 3 we may assume that $q$ is square-free, $m < n$ (since it suffices to prove Theorem \ref{B}). Let $q=p_1\cdot p_2 \cdot \ldots \cdot p_s$ ($p_1\ge p_2\ge \ldots \ge p_s$) be the prime factorization of $ q $. Define $A$ as the matrix formed by writing vectors  $q\cdot a_1, \ldots, q \cdot a_m$ as its  rows. For each $j$, the rank of the matrix $A$ over the field $\mathbb{Z}_{p_j}$ will be denoted as $rank_j$.\ 

Let $ {\cal C}_n = \{1, \dots, n\} $ be the set of all coordinate indexes. For each $ j \in \{1, \dots, s\} $ let $M^i_j$  denote  $ rank_j $-element subsets of $ {\cal C}_n $ such that for an arbitrary $ i $ the columns of the matrix $A$ with numbers from $ M_j^i $ are linearly independent over the field $\mathbb{Z}_{p_j}$. For a fixed $ j $, the family of sets $M^i_j$ will be denoted as $ {\cal M}_j $. Finally, the system of families of sets $ \mathfrak{M} $ is defined as $ \mathfrak{M} = \{{\cal M}_1, \dots, {\cal M}_s\} $.

\paragraph{Remark.}

In $[1],[2]$ there was no reduction to the square-free case (Theorem 3). Instead, matrix $A$ was considered over rings $\mathbb{Z}_{p_j^k}$ and most statements were formulated in terms of rings (with the usage of an undefined rank over ring). However, in those terms Theorem 4 as well as auxiliary lemmas afterwards and final constructions in the proof turned out to be wrong. Since even in the square-free case in $[1]$ and $[2]$ there were substantial inaccuracies, in most following remarks we are only going to describe inaccuracies in that case even though all statements in $[1],[2]$ were formulated in the general case.

\subsection{The relation between the defect and the system $ \mathfrak{M} $}

Let $M$ be a subset of $ {\cal C}_n $ such that for any $ j \in \{1, \dots, s\} $ there exists $ i \in \left\{1, \dots, |{\cal M}_j|\right\} $ for which $ M_j^i \subseteq M $.

\begin{theorem}\label{d}
Let $ \Lambda = \left \langle {\mathbb Z}^n,a_1, \dots,a_m \right \rangle_{{\mathbb Z}} $.
Then the following inequality is satisfied: $ d({\cal E}, \Lambda) \leq |M| $.
\end{theorem}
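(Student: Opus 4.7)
The plan is to show that the $n-|M|$ coordinate vectors $\{e_i : i \notin M\}$ form a primitive (pure) subset of $\Lambda$, so they can be completed to a $\mathbb{Z}$-basis of $\Lambda$ by adjoining at most $|M|$ further vectors of $\Lambda$; this will give $d({\cal E},\Lambda) \le |M|$ at once. Standard lattice theory reduces primitivity to the following statement: every $x \in \Lambda$ whose coordinates $x_i$ vanish for all $i \in M$ already lies in $\mathbb{Z}^n$. This is the assertion I would prove.

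Given such an $x$, I write $x = z + c_1 a_1 + \cdots + c_m a_m$ with $z \in \mathbb{Z}^n$ and $c_1,\ldots,c_m \in \mathbb{Z}$. Multiplying by $q$ gives $q(x-z) = c^T A$, where $c = (c_1, \ldots, c_m)$ and $A$ is the integer matrix whose rows are $qa_1, \ldots, qa_m$. For every $i \in M$ we have $x_i = 0$, so $(c^T A)_i = -qz_i$ is divisible by $q$, and in particular by every prime factor $p_j$ of $q$.

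Now fix $j$. By the hypothesis on $M$ there exists an index $i_j$ with $M_j^{i_j} \subseteq M$, so $c^T$ annihilates, modulo $p_j$, all columns of $A$ indexed by $M_j^{i_j}$. Those $rank_j$ columns are by construction $\mathbb{Z}_{p_j}$-linearly independent, hence form a basis of the column space of $A$ mod $p_j$; therefore $c^T A \equiv 0 \pmod{p_j}$ as a whole row vector. Since $q$ is square-free, the Chinese Remainder Theorem then yields $c^T A \equiv 0 \pmod{q}$, so $\sum_i c_i a_i = q^{-1} c^T A \in \mathbb{Z}^n$ and $x = z + \sum c_i a_i \in \mathbb{Z}^n$, as needed.

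The one substantive step is the passage from ``$c^T$ annihilates the $M_j^{i_j}$-indexed columns mod $p_j$'' to ``$c^T$ annihilates every column of $A$ mod $p_j$'': this is precisely where the maximality built into the definition of ${\cal M}_j$ (the columns in $M_j^{i_j}$ are linearly independent and their number equals $rank_j$) is used, and it is the sole place the combinatorial system $\mathfrak{M}$ enters. Everything else (the decomposition $x = z + \sum c_i a_i$, the square-free reduction supplied by Theorem \ref{eq}, and the CRT) is routine.
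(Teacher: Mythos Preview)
Your proof is correct and follows essentially the same route as the paper's own argument: both show that any $x\in\Lambda$ supported on ${\cal C}_n\setminus M$ must lie in $\mathbb{Z}^n$, by writing $x$ as an integer vector plus $\frac{1}{q}kA$, observing that the $M$-indexed entries of $kA$ are divisible by each $p_j$, and then using that the $M_j^{i_j}$-indexed columns span the column space of $A$ over $\mathbb{Z}_{p_j}$ to force $kA\equiv 0\pmod{p_j}$ for every $j$. The paper likewise concludes by invoking the standard fact (citing Cassels) that the $e_i$ with $i\notin M$ can then be completed to a basis of $\Lambda$.
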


\begin{proof}

 Any point of the lattice $\Lambda$ can be represented as $ \frac{1}{q}\cdot kA +b $,
where $ k = (k_1, \ldots, k_m) $ is a row of $m$ integer numbers, $A$ is the matrix defined in the previous section and $ b $ is a vector in $ \mathbb{Z}^n $.

Consider a subspace of $\mathbb{R}^n$ spanned by the coordinate axes with indexes that do not belong to $ M $.
Assume that a point $ x = \frac{1}{q}\cdot  kA + b $ of the lattice $ \Lambda $ lies in this subspace. Then its coordinates with numbers from $ M $ are equal to zero.
Let us fix a number $ j \in \{1, \dots, s\} $. By definition of $ M $, there exists a set $ M^i_j = \left\{v_1, \ldots, v_{rank_j}\right\} $ which is fully embedded in $ M $. Thus the coordinates of $ x $ numbered as $ v_1, \ldots, v_{rank_j} $ are also equal to zero. In other words, coordinates of the vector $ kA $ numbered as $ v_1, \ldots, v_{rank_j} $ are divisible by $ q $, and thus also by $ p_j $. However, columns of the matrix $ A $
numbered as $ v_1, \ldots, v_{rank_j} $ form a maximal linearly independent set of vectors of the matrix $ A $ over the field $ {\mathbb Z}_{p_j} $ (by the definition of the set $ M^i_j $). Then all other columns of $A$ can be expressed over the field $\mathbb{Z}_{p_j}$ as linear combinations of these $rank_j$ columns.
Therefore, all coordinates of the vector $ kA $ are divisible by $ p_j $. Since this applies for any $ j \in \{1, \dots, s\} $,  all coordinates of the vector $ kA $ are therefore divisible by $ q $. Thus $ x \in {\mathbb Z}^n $, meaning (see \cite{Cass}) that vectors of the frame $ {\cal E} $ with numbers from $ {\cal C}_n \setminus M $ can be completed to form a basis of the lattice $ \Lambda $, and thus we have $ d({\cal E}, \Lambda) \leq |M| $.

\end{proof}

\paragraph{Remark.}
In $[1],[2]$ $M$ was defined as a set which for every $j$ contains some maximum set of indexes of columns which are linear independent over the ring $\mathbb{Z}_{p_j^k}$. The same inequality was claimed. One can easily construct a contrexample to this version of the theorem by considering  $n = 2, a_1=(\frac{1}{p^2}, \frac{1}{p^2}), a_2=(\frac{1}{p^2},\frac{1}{p^2} + \frac{1}{p})$.

\vskip+0.7cm

Theorem \ref{d} holds for any $ M $, allowing us to write $ d({\cal E}, \Lambda) \le \theta(\mathfrak{M}) $, where $ \theta(\mathfrak{M}) $ is the cardinality of the smallest set $ M $. In the next subsection we are going to recall a problem similar to approximation of $ \theta $.

\subsection{A covering problem}

Let $ {\cal L} = \{L_1, \dots, L_t\} $ be an arbitrary family of subsets of the set $ {\cal C}_n $. Its {\it  system of common representatives (SCR)} is defined as a set $ S \subseteq {\cal C}_n $ that includes at least one element from each $ L_i $. The minimum size of an SCR for $ {\cal L} $ is denoted as $ \tau({\cal L}) $. Clearly, the setting in the previous subsection is more general: instead of a family of sets we consider the system of families of sets $ \mathfrak{M} $. If we assume that the size of all sets in every family from $ \mathfrak{M} $ equals one, then the set $ M $ defined in the previous subsection is, as a matter of fact, an SCR. Theorem 5 below provides an upper bound on the size of a minimal SCR which will later help us to obtain a bound for  $ \theta(\mathfrak{M}) $. A proof and a discussion of this theorem can be found in \cite{Kuz}, \cite{Rai3}, \cite{Rai6}.

\begin{theorem}\label{sop}
Assume that $ |L_i| \ge k $ for each $ i \in \{1, \dots, t\} $. Then there exists a constant $ c $ such that
$$
\tau({\cal L}) \leq c \frac{n}{k} \cdot \max\left\{1,\ln{\frac{tk}{n}}\right\}.
$$

\end{theorem}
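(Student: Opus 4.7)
The plan is to prove Theorem \ref{sop} by the classical first-moment / greedy-repair argument for set cover problems: build a random near-SCR and then patch up the few sets it misses.

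\emph{Steps 1--2 (random pre-cover and patch).} Fix a parameter $p \in (0,1]$ to be chosen later. Include each element of $\mathcal{C}_n$ in a random set $S_1$ independently with probability $p$. Then $\mathbb{E}|S_1| = pn$, and for each $i \in \{1,\ldots,t\}$ the probability that $L_i \cap S_1 = \emptyset$ equals $(1-p)^{|L_i|} \le (1-p)^k \le e^{-pk}$, where the first inequality uses $|L_i| \ge k$. For every $i$ with $L_i \cap S_1 = \emptyset$ I pick an arbitrary element of $L_i$ and add it to a second set $S_2$. By construction $S_1 \cup S_2$ is an SCR of $\mathcal{L}$, so
$$\tau(\mathcal{L}) \le \mathbb{E}|S_1 \cup S_2| \le pn + t e^{-pk}.$$

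\emph{Step 3 (choice of $p$).} If $tk/n \ge e$, I take $p = \tfrac{1}{k}\ln(tk/n)$; this is the minimizer of the right-hand side and yields $\tau(\mathcal{L}) \le \tfrac{n}{k}\bigl(\ln(tk/n)+1\bigr)$. If instead $tk/n < e$, I take $p = 1/k$, which gives $\tau(\mathcal{L}) \le n/k + t/e \le 2n/k$, since then $t < en/k$. In either case the bound is at most $c\,\tfrac{n}{k}\max\{1,\ln(tk/n)\}$ for a suitable absolute constant $c$.

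The only subtlety --- and the closest thing to an obstacle --- is the degenerate regime $\ln(tk/n) > k$, where the candidate $p$ above exceeds $1$ and cannot be used directly. There I fall back to the trivial SCR $S = \mathcal{C}_n$ of size $n$; since $k < \ln(tk/n)$ in this regime, the inequality $n \le \tfrac{n}{k}\ln(tk/n)$ still holds, so the target bound survives. Beyond this bookkeeping the argument is a routine application of the first-moment method, which is presumably why the paper cites \cite{Kuz}, \cite{Rai3}, \cite{Rai6} for Theorem \ref{sop} rather than reproving it.
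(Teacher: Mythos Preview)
Your argument is correct: it is the standard first-moment/alteration proof for the set-cover bound, and your case analysis (including the degenerate regime $\ln(tk/n)>k$) is handled cleanly. The paper itself does not prove Theorem~\ref{sop} at all --- it merely states the result and defers to \cite{Kuz}, \cite{Rai3}, \cite{Rai6} --- so there is no in-paper proof to compare against; the probabilistic argument you give is exactly the one found in those references.
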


\section{Proof of Theorem \ref{B}}

\subsection{Outline of the proof}

Consider vectors $a_1, \dots,a_m \in {\mathbb Q}^n $. Let us construct a system of families of sets  $ \mathfrak{M} = \{{\cal M}_1, \dots, $ $ {\cal M}_s\} $ using the method from Subsection 3.1. We would like to prove the inequality
$$
\theta(\mathfrak{M}) \le C \frac{n \ln (m+1)}{\ln \frac{n}{m}} \left(\ln\ln \left(\frac{n}{m}\right)^m \right)^2
$$
by applying Theorem 5. 
Subsection 4.3 is going to contain this proof, and the auxiliary lemmas used in the proof are presented in the following subsection.

\subsection{Auxiliary Lemmas}

\begin{lemma}\label{sys}

$ \det \Lambda = p_1^{-rank_1} \cdot p_2^{-rank_2} \cdot \ldots \cdot p_s^{-rank_s}$

\end{lemma}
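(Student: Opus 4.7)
The plan is to compute the index $[\Lambda : \mathbb{Z}^n]$ and use the standard identity $\det \Lambda = \det \mathbb{Z}^n / [\Lambda : \mathbb{Z}^n] = 1/[\Lambda : \mathbb{Z}^n]$, so it suffices to show $[\Lambda : \mathbb{Z}^n] = \prod_{j=1}^s p_j^{rank_j}$.

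First I would observe, exactly as in the proof of Theorem \ref{d}, that every element of $\Lambda$ has the form $\tfrac{1}{q} kA + b$ with $k \in \mathbb{Z}^m$ (row vector) and $b \in \mathbb{Z}^n$. Therefore the quotient $\Lambda/\mathbb{Z}^n$ coincides with the image of the group homomorphism
$$
\varphi: \mathbb{Z}^m \longrightarrow \tfrac{1}{q}\mathbb{Z}^n/\mathbb{Z}^n, \qquad k \mapsto \tfrac{1}{q} kA + \mathbb{Z}^n.
$$
Multiplication by $q$ identifies $\tfrac{1}{q}\mathbb{Z}^n/\mathbb{Z}^n$ with $(\mathbb{Z}/q\mathbb{Z})^n$, and under this identification $\varphi$ becomes the composition $\mathbb{Z}^m \twoheadrightarrow (\mathbb{Z}/q\mathbb{Z})^m \xrightarrow{\;\cdot A\;} (\mathbb{Z}/q\mathbb{Z})^n$. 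Hence $[\Lambda : \mathbb{Z}^n] = |\operatorname{Im} \varphi|$ equals the size of the row span of $A$ in $(\mathbb{Z}/q\mathbb{Z})^n$.

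Next I would invoke the assumption that $q = p_1 p_2 \cdots p_s$ is square-free. The Chinese Remainder Theorem then gives a ring isomorphism $\mathbb{Z}/q\mathbb{Z} \cong \prod_{j=1}^s \mathbb{Z}_{p_j}$, hence an $\mathbb{Z}/q\mathbb{Z}$-module isomorphism $(\mathbb{Z}/q\mathbb{Z})^n \cong \prod_{j=1}^s \mathbb{Z}_{p_j}^n$ which is compatible with multiplication by the integer matrix $A$. Consequently $\varphi$ splits into the product of the $s$ maps
$$
\varphi_j : \mathbb{Z}^m \longrightarrow \mathbb{Z}_{p_j}^n, \qquad k \mapsto kA \pmod{p_j}.
$$
Since each $\mathbb{Z}_{p_j}$ is a field, $\operatorname{Im}\varphi_j$ is the $\mathbb{Z}_{p_j}$-linear row span of $A$ and has cardinality $p_j^{rank_j}$ by the very definition of $rank_j$.

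Finally I would combine the factors: since the map $\mathbb{Z}^m \twoheadrightarrow (\mathbb{Z}/q\mathbb{Z})^m \cong \prod_j \mathbb{Z}_{p_j}^m$ is surjective and $\varphi$ acts coordinate-wise in the product decomposition, $\operatorname{Im}\varphi = \prod_{j=1}^s \operatorname{Im}\varphi_j$, whence $|\operatorname{Im}\varphi| = \prod_{j=1}^s p_j^{rank_j}$. This yields $\det \Lambda = \prod_{j=1}^s p_j^{-rank_j}$ as required. The only nontrivial step is the CRT splitting in the previous paragraph; this is precisely where the square-free reduction of Theorem \ref{eq} is essential, as the naive extension to $q = \prod p_j^{\alpha_j}$ would force one to work over rings $\mathbb{Z}_{p_j^{\alpha_j}}$ in which the notion of rank used here is not well defined.
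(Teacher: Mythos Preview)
Your argument is correct and is in fact cleaner than the paper's. Both proofs reduce to computing the index $[\Lambda:\mathbb{Z}^n]$ and identifying it with $\prod_j p_j^{rank_j}$, and both ultimately rely on the Chinese Remainder Theorem together with the square-free hypothesis on $q$; the difference is where CRT enters. The paper builds the chain $\Lambda_0=\mathbb{Z}^n\subset\Lambda_1\subset\cdots\subset\Lambda_m=\Lambda$ by adjoining the $a_k$ one at a time, shows that each successive index $[\Lambda_k:\Lambda_{k-1}]$ equals a certain square-free $q_k\mid q$ (using CRT inside this step), and then observes that the product $q_1\cdots q_m$ collects exactly $rank_j$ factors of each $p_j$. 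You instead apply CRT once, globally, to split the single homomorphism $\mathbb{Z}^m\to(\mathbb{Z}/q\mathbb{Z})^n$ into its components over the fields $\mathbb{Z}_{p_j}$, after which the size of each component image is read off directly as $p_j^{rank_j}$ from the definition of $rank_j$. Your route is shorter and more structural, avoiding the somewhat delicate verification that $r a_k\notin\Lambda_{k-1}$ for $0<r<q_k$; the paper's filtration argument is more elementary in flavour but requires keeping track of linear independence prime by prime along the chain. Your closing remark about why the square-free reduction is needed matches the paper's own motivation for introducing Theorem~\ref{eq}.
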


\begin{proof}

Denote  $ \Lambda_k = \left \langle {\mathbb Z}^n, a_1, \dots,a_k \right \rangle_{{\mathbb Z}} $ for $0\le k \le m$. We have $\Lambda_0 \subset \Lambda_1 \ldots \subset \Lambda_{m}$ and $\Lambda_k / \Lambda_{k-1}=\langle a_k\rangle$. Define a number $q_k$ in the following way. Let $q \cdot a_{k}$ not lie in $\left \langle q\cdot a_1, \dots, q \cdot  a_{k-1} \right \rangle_{{\mathbb Z}_{p_j}}$ for $p_j| q_{k}$ and lie for all other $p_j$.\\

Let $r$ be integer such that $0<r<q_k$. Suppose that $r\cdot a_k \in \Lambda_{k-1}$. There exists $i$ such that $p_i| q_k$ but $(p_i,r)=1$. By assumption, $r \cdot a_k=b_1 \cdot a_1+ \ldots+ b_{k-1} \cdot a_{k-1}$, where $b_1,\ldots, b_{k-1}$ are integers. But that means that in ${\mathbb Z}_{p_i}$ we have $a_k=r^{-1}b_1 \cdot a_1 + \ldots + r^{-1}b_{k-1} \cdot a_{k-1}$ which contradicts the definition of $q_k$.\\

By Chinese Remainder Theorem and definition of $q_k$ there exist integers $b_1, \ldots, b_{k-1}$ such that each coordinate of $q\cdot a_k-qb_1 \cdot a_1- \ldots - qb_{k-1} \cdot a_{k-1}$ is divisible by $\frac{q}{q_k}$ i.e. $q_k\cdot a_k-q_kb_1 \cdot a_1- \ldots - q_kb_{k-1} \cdot a_{k-1} \in {\mathbb Z}^n$.\
 
 \vspace{0.5 cm}
So, $a_k,2a_k, \ldots, (q_k-1)a_k \notin \Lambda_{k-1}$ while $q_ka_k \in \Lambda_{k-1}$. Thus index of $\Lambda_{k-1}$ in $\Lambda_k$ is $q_k$. Since $q\cdot a_k$ cannot be expressed as a linear combination of $q \cdot a_1, \ldots, q \cdot a_{k-1}$ over ${\mathbb Z}_{p_j}$ for $p_j|q_k$ and can be expressed as a linear combination of $q \cdot a_1, \ldots, q\cdot a_{k-1}$ over ${\mathbb Z}_{p_j}$ for all other $p_j$, $q_1 \cdot \ldots \cdot q_m= p_1^{rank_1} \cdot p_2^{rank_2} \cdot \ldots \cdot p_s^{rank_s}$. Then we have $1=\det \Lambda_0=q_1 \det \Lambda_1= \ldots =q_1 \cdot \ldots \cdot q_{m} \cdot \det \Lambda_{m}= p_1^{rank_1} \cdot \ldots \cdot p_s^{rank_s} \cdot \det \Lambda$ which concludes the proof.

\end{proof} 

\begin{lemma}\label{m}
Let $ j \in \{1, \dots, s\}, p_j\ge5 $ and let $ v_1, \ldots, v_l$ be $l$ integers, $ 0 \leq l < rank_j $, $ 1 \leq v_i \leq n $, such that columns of the matrix $A$ (see Subsection 3.1) numbered as $ v_1, \ldots, v_l $ are independent over  $ \mathbb{Z}_{p_j} $. Let  $\tilde{M}_j$ be the set of indexes of columns which are linearly independent with columns numbered  $ v_1, \ldots, v_l$ over ${\mathbb Z}_{p_j}$. The following inequality holds:
$$
\left|\tilde{M}_j \right| \ge \frac{1}{2} \cdot \frac{\ln{p^{rank_j-l}_j}}{\ln{\ln{p^{rank_j-l}_j}}}.
$$
\end{lemma}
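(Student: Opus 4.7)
The plan is to dualize: the admissibility of $\Lambda$ will translate, through a Chinese-Remainder-Theorem lift, into a minimum-distance condition for a linear ``code'' $W\subseteq\mathbb{Z}_{p_j}^k$ of dimension $N:=rank_j-l$, where $k=|\tilde M_j|$; a standard Minkowski-type pigeonhole argument then forces $k$ to satisfy the claimed lower bound.

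For the setup, I view the columns of $A$ as vectors in $\mathbb{Z}_{p_j}^m$, put $V=\langle v_1,\dots,v_l\rangle_{\mathbb{Z}_{p_j}}$, and let $V^\perp\subseteq\mathbb{Z}_{p_j}^m$ be its orthogonal complement (of dimension $m-l$). The map $\phi\colon V^\perp\to\mathbb{Z}_{p_j}^{k}$ defined by $w\mapsto (wA)|_{\tilde M_j}$ loses no information, since any $w\in V^\perp$ annihilates every column of $A$ whose index is not in $\tilde M_j$ (those columns lie in $V$ by the definition of $\tilde M_j$). The kernel of $\phi$ equals the annihilator of the full column span of $A$, which has dimension $m-rank_j$, so $W:=\phi(V^\perp)\subseteq\mathbb{Z}_{p_j}^k$ has dimension $(m-l)-(m-rank_j)=N$.

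The key step uses admissibility through CRT. For each $w_0\in V^\perp$ with $\phi(w_0)\ne 0$, the square-freeness of $q$ allows me to lift $w_0$ to an integer vector $w\in\mathbb{Z}^m$ satisfying $w\equiv w_0\pmod{p_j}$ and $w\equiv 0\pmod{q/p_j}$. Every coordinate of $wA$ is then a multiple of $q/p_j$, and since $wA\not\equiv 0\pmod q$, the lattice point $\frac{1}{q}wA$ is non-integer. By admissibility its minimal $L^1$-representative modulo $\mathbb{Z}^n$ has norm $>1$; after cancelling the factor $q/p_j$ and using that multiplication by $(q/p_j)^{-1}\bmod p_j$ is an automorphism of $W$, I conclude that every nonzero $u\in W$ satisfies $\sum_i |u_i|_{p_j}\ge p_j$, where $|u_i|_{p_j}$ denotes the distance from $u_i$ to the nearest multiple of $p_j$.

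A pigeonhole count then finishes the job. Suppose for contradiction that $k<\tfrac{1}{2}\cdot\frac{N\ln p_j}{\ln(N\ln p_j)}$. Put $B=\lfloor p_j/(2k)\rfloor$; the cube $[-B,B]^k\subset\mathbb{Z}_{p_j}^k$ then contains at least $(p_j/(2k))^k$ points (using $p_j\ge 5$ to justify the approximation), while the assumption on $k$ translates via an elementary calculation into $k\ln(2k)<N\ln p_j$, and hence $(p_j/(2k))^k>p_j^{k-N}$, which is the number of cosets of $W$ in $\mathbb{Z}_{p_j}^k$. Two distinct cube points must therefore share a $W$-coset, and their difference is a nonzero $w\in W$ with $\|w\|_1\le 2Bk\le p_j$, contradicting the minimum-distance bound from the previous step. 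The delicate ingredient is the CRT lift: cutting admissibility (which naturally lives modulo $q$) down to a single prime $p_j$ requires the square-freeness of $q$, i.e., Theorem~\ref{eq}; this is presumably where the arguments in \cite{BR}, \cite{BRA} went astray, and everything else is a routine counting argument.
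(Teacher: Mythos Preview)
Your argument is correct and constitutes a genuinely different route from the paper's. The paper proceeds geometrically: it first reduces (without loss of generality) to the case $q=p_j$ and $m=\mathrm{rank}_j$, then performs a Gaussian-elimination step on the generators $a_k$ to produce vectors $x_k\in\Lambda$ supported on $\tilde M_j$, and invokes the determinant formula of Lemma~\ref{sys} to bound the covolume of the resulting sublattice of the coordinate subspace by $p_j^{-(rank_j-l)}$; Minkowski's theorem applied to the admissible cross-polytope in that subspace then yields $|\tilde M_j|!\ge p_j^{rank_j-l}$ and hence the claimed bound. Your approach instead passes to the dual picture: the CRT lift translates admissibility into a Lee-weight lower bound on the linear code $W\subseteq\mathbb{Z}_{p_j}^{k}$, and a direct box-pigeonhole replaces the appeal to Minkowski. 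What the paper's route buys is a clean one-line endgame ($k!\ge p_j^{N}$) once Lemma~\ref{sys} is in hand; what your route buys is self-containment (no need for Lemma~\ref{sys} or an explicit sublattice construction) at the cost of a slightly more delicate count. Two small points worth tightening in your write-up: admissibility actually gives the \emph{strict} inequality $\sum_i|u_i|_{p_j}>p_j$, which you need since your box argument only produces $\|w\|_1\le 2Bk\le p_j$ (in fact $2Bk<p_j$ because $p_j$ is odd, so either version suffices); and the approximation $(2B+1)^k\ge(p_j/(2k))^k$ requires $k\le p_j/2$, which does follow from your contradiction hypothesis together with $N\le k$, but deserves a sentence.
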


\paragraph{Remark.}
 In $[1],[2]$  in the formulation of the lemma in the inequality there was $m$ instead of $rank_j$. However, this version of the lemma obviously does not hold: for instance, with fixed $p_j$ and limitlessly increasing $m$, the right-hand side is limitlessly increasing while the left-hand side can be constant. We introduced Lemma 3 in order to show a correct proof of the correct version of the lemma.
 
\begin{proof} 
It suffices to prove the lemma in the case $m=rank_j, q=p_j.$ Let $\Lambda'$ be a lattice obtained by the intersection of $\Lambda$ with 
subspace spanned by the coordinate axes numbered by elements of $\tilde{M}_j$. We define family of vectors $ a^i_k \ (i=0, \ldots, l; k=1, \ldots, m)$ using the following algorithm.

\begin{itemize}
    \item Put $a^0_k=a_k$,\\  

\item If for each $k$ the $v^{th}_i$ coordinate of $a^{i-1}_k$ is integer then let  $a^i_k=a^{i-1}_k$.\\

\item Otherwise for some $k \ v^{th}_i$ coordinate of $p_j \cdot a^{i-1}_{k}$ is not divisible by $p_j$. Thus for each $r$ there exists integer $c^i_r$ such that the $v^{th}_i$ coordinate of $a^{i-1}_r+c^i_r \cdot a^{i-1}_{k}$ is an integer. Let $a^i_k=a^{i-1}_r+c^i_r \cdot a^{i-1}_{k}$.

\end{itemize}

Obviously, $rank_{\mathbb{Z}_{p_j}} (\{ p_j \cdot a^i_k \}) \ge rank_{\mathbb{Z}_{p_j}}(\{p_j \cdot a^{i-1}_k\})-1$. Thus $rank_{\mathbb{Z}_{p_j}}(\{p_j \cdot a^l_k\})\ge rank_j-l$.\

Consider vectors $p_j \cdot a^l_k$. By the construction, coordinates numbered by $ v_1, \ldots, v_l$   of these vectors are equal to zero in ${\mathbb Z}_{p_j}$. By definition, all columns of matrix A with indexes  from $\tilde{M}_j$ can be expressed over ${\mathbb Z}_{p_j}$ as linear combinations of columns numbered  by $ v_1, \ldots, v_l$. Since vectors $p_j \cdot a^l_k$ are linear combinations of  $p_j \cdot a_1, \ldots, p_j \cdot a_m$ we obtain that   coordinates numbered by elements of $\tilde{M}_j$ of these vectors are equal to zero in ${\mathbb Z}_{p_j}$. That means that for every $k$ there exists an integer vector $t_k$ such that  all coordinates numbered by elements of $\tilde{M}_j$ of $a^l_k+t_k$ are equal to zero. Note that  $ rank_{\mathbb{Z}_{p_j}}(\{p_j \cdot (a^l_k+t_k)\})= rank_{\mathbb{Z}_{p_j}}(\{p_j \cdot a^l_k \})\ge rank_j-l$ and $x_k=a^l_k+t_k \in \Lambda'$.\\

Let $n* = |\tilde{M}_j|$ and let $ {\mathbb Z}^{n*}$ be the subspace of ${\mathbb Z}^n$ spanned by the coordinate axes with indexes from $\tilde{M}_j$. Applying Lemma $1$ for lattice  $ \Gamma = \left \langle {\mathbb Z}^{n*},x_1, \dots, x_m \right \rangle_{{\mathbb Z}} $ we obtain $p^{l-rank_j}\ge \det \Gamma \ge \det \Lambda'$. Since unit octahedron $ O_{\cal E}^{n*} $ is admissible in $\Lambda'$ we can apply  Minkowski's Theorem (see \cite{Cass}):

$$
Vol(O_{\cal E}^{n*})=\frac{2^{\left|\tilde{M}_j \right|}}{\left|\tilde{M}_j \right|!} \leq 2^{\left|\tilde{M}_j \right|} \cdot \det \Lambda'\leq \frac{2^{ \left|\tilde{M}_j \right| }}{p^{rank_j-l}_j} \Longrightarrow \left|\tilde{M}_j \right|! \geq p^{rank_j-l}_j \Longrightarrow
 \left|\tilde{M}_j \right|\geq \frac{1}{2} \cdot \frac{\ln{p^{rank_j-l}_j}}{\ln{\ln{p^{rank_j-l}_j}}}.
$$
The final inequality follows from the condition $ p_j \ge 5 $. The lemma is proved.
\end{proof}

\begin{lemma}\label{s}

The following inequality holds: $ s \le n $.

\end{lemma}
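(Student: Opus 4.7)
The plan is to combine Lemma \ref{sys} with Minkowski's convex body theorem and a short number-theoretic observation about distinct primes. First I would use Lemma \ref{sys} to rewrite the index $[\Lambda:\mathbb{Z}^n]=1/\det\Lambda$ as $\prod_{j=1}^{s} p_j^{rank_j}$. Since the octahedron $O^n_{\mathcal{E}}$ is admissible with respect to $\Lambda$, its interior contains no nonzero point of $\Lambda$, so Minkowski's theorem applied to it (volume $2^n/n!$) gives $\det\Lambda \ge 1/n!$, equivalently
\[
\prod_{j=1}^{s} p_j^{rank_j} \le n!.
\]

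Next I would verify that $rank_j \ge 1$ for every $j$. Indeed, if all rows of the matrix $A$ were divisible by $p_j$, then $q/p_j$ would already be a common denominator of the coordinates of $a_1, \dots, a_m$, contradicting the minimality of $q$. This is the only mildly delicate step and rests on the opening sentence of Subsection 3.1, where $q$ was chosen as the \emph{least} possible common denominator. Consequently
\[
\prod_{j=1}^{s} p_j \le \prod_{j=1}^{s} p_j^{rank_j} \le n!.
\]

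Finally, since $p_1,\dots,p_s$ are distinct primes, sorting them in increasing order gives $p_{(j)} \ge j+1$ (the $j$-th element of $2,3,5,7,11,\dots$ is at least $j+1$), so
\[
\prod_{j=1}^{s} p_j \;\ge\; 2\cdot 3\cdots (s+1) \;=\; (s+1)!.
\]
Combining this with the previous inequality yields $(s+1)! \le n!$, hence $s+1 \le n$ and in particular $s \le n$, as claimed. There is no real obstacle: the argument is a two-line chain once Lemma \ref{sys} and the admissibility of the octahedron are in hand.
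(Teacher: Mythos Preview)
Your proof is correct and follows essentially the same route as the paper: Minkowski's theorem plus Lemma~\ref{sys} gives $q=\prod_j p_j \le n!$, and the elementary lower bound on a product of $s$ distinct primes forces $s\le n$. The paper simply writes $\det\Lambda\le 1/q$ (tacitly using $rank_j\ge 1$, which you spell out) and the cruder estimate $\prod_j p_j\ge s!$; your extra care yields the marginally stronger $(s+1)!\le n!$, but the argument is the same.
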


\begin{proof} The octahedron $ O_{\cal E}^n $ is admissible with respect to the lattice $ \Lambda $,  $\det \Lambda \le \frac{1}{q} $ (follows from Lemma 3). Thus, from Minkowski's Theorem, we have:
$$
\frac{2^n}{n!} \leq \frac{2^n}{q} \Longrightarrow q \le n!,
$$
and $ q = p_1 \dots p_s \ge s! $, which proves the lemma.

\end{proof}
\subsection{A bound for $ \theta(\mathfrak{M}) $}

Consider the system of families of sets $ \mathfrak{M_0} = \{{\cal M}_1, \dots, $ $ {\cal M}_t\} $, where $t$ is the maximum index such that $p_t\ge \frac{n}{m}$. We can assume that $ n $ is sufficiently large. We can also assume that $ m \ll e^{(\ln n)^{1/3}} $ (otherwise the desired bound is trivial).\\

Let us start by defining  $ L_j $ (for each $j$ such that $|M_j^1|=m$)  as the union of all sets from the family $ {\cal M}_j \in \mathfrak{M}_0$. Consider a family of sets $ {\cal L} = \{L_{i_1}, \dots, L_{i_r}\}$. Let us build a minimal SCR $ {\cal L} $ (\S 3.3) and estimate the cardinality of this SCR or, in other words, obtain a bound for $ \tau({\cal L}) $.  Applying Lemma 4 with $ l = 0 $ we obtain
$ |L_{i_j}| \ge \frac{1}{2} \cdot \frac{\ln{p^{m}_{i_j}}}{\ln{\ln{p^{m}_{i_j}}}} $. Here we choose $ n $ to be sufficiently large for the inequality  $ p_{i_j} > \frac{n}{m} > 5 $ to be satisfied. For sufficiently large values of $x$, the function $ \frac{\ln x}{\ln\ln x} $ is increasing, therefore we can write
$$
|L_{i_j}| \ge \frac{1}{2} \cdot \frac{\ln \left(\frac{n}{m}\right)^{m}}{\ln\ln \left(\frac{n}{m}\right)^{m}}.
$$
Let
$$
k = \frac{1}{2} \cdot \frac{\ln \left(\frac{n}{m}\right)^{m}}{\ln\ln \left(\frac{n}{m}\right)^{m}}.
$$
From Lemma \ref{s} we have $ r\le t \le s \le n $, and thus Theorem \ref{sop} yields
$$
\tau({\cal L}) = O\left(\frac{n}{k} \ln k\right) = O\left(\frac{n}{\ln \left(\frac{n}{m}\right)^{m}} \cdot \left(\ln\ln \left(\frac{n}{m}\right)^{m}\right)^2\right).
$$

Let $ \tau_1 = \tau({\cal L}) $, and let us  denote the elements of the corresponding SCR as $ v_1^1, \dots, v_{\tau_1}^1 $.\\

For each $ j \in \{i_i, \dots, i_r\} $ consider an element $ v_{\nu(j)}^1 $ that lies in the set $ L_j $. Clearly, this element lies in a number of sets $ M_j^i $ in the family $ {\cal M}_j $. For each identified set $ M_j^i $, replace  $ {M}_j^i$ by $ M_j^i \setminus
\left\{v_{\nu(j)}^1\right\} $. Now delete all other $ {M}_j^i$ from  ${\cal M}_j$ (if ${\cal M}_j$ became empty or now contains only empty sets then we delete it from $\mathfrak{M_0}$)  and rename sets which are left( which contain $v_{\nu(j)}^1$) so that ${\cal M}_j=\{ M_j^1,\ldots M_j^{h(j)}\}$. Define $L_j$ (for $j$ such that $|M_j^1|=m-1$) as union of all sets of  ${\cal M}_j $.\\

From Lemma 4 with $ l = 1 $ or $0$ ( depending on $rank_j$), we have 

$$
|L_j| \ge \frac{1}{2} \cdot \frac{\ln \left(\frac{n}{m}\right)^{m-1}}{\ln\ln \left(\frac{n}{m}\right)^{m-1}}.$$ 

Let \{$ v_1^2, \dots, v_{\tau_2}^2 \} $ be an SCR for ${\cal L}$. As before, Lemma 4 and Theorem 5 yield

$$\tau({\cal L})=O\left(\frac{n}{\ln \left(\frac{n}{m}\right)^{m-1}} \cdot \left(\ln\ln \left(\frac{n}{m}\right)^{m}\right)^2\right).$$

Repeating this procedure  $ m $ times we obtain the following set:
$$
M' = \left\{v_1^1, \dots, v_{\tau_1}^1\right\} \sqcup \left\{v_1^2, \dots, v_{\tau_2}^2\right\} \sqcup \ldots \sqcup \left\{v_1^m, \dots, v_{\tau_m}^m\right\}.
$$

Let $M^* = \underset{p_i < \frac{n}{m}}{{\bigcup}} M_i^1, M= M' \bigcup M^*$. From the prime number theorem (see \cite{Kar}) and inequality $rank_i\le m$, $|M^*|$= $ O\left(m\frac{n}{m \ln (n/m)}\right)=O\left(\frac{n}{ \ln (n/m)}\right)=O\left(\frac{n \ln (m+1)}{\ln \frac{n}{m}} \left(\ln\ln \left(\frac{n}{m}\right)^m \right)^2\right)$. It is clear that $ \theta(\mathfrak{M}) \le |M| $, i.e., we can write

\begin{align*}
\theta(\mathfrak{M})-|M^*| &\leq O\left(\frac{n}{\ln \left(\frac{n}{m}\right)^{m}} \cdot \left(\ln\ln \left(\frac{n}{m}\right)^{m}\right)^2\right) +
O\left(\frac{n}{\ln \left(\frac{n}{m}\right)^{m-1}} \cdot \left(\ln\ln \left(\frac{n}{m}\right)^{m}\right)^2\right) + \ldots + \\
&\quad + O\left(\frac{n}{\ln \left(\frac{n}{m}\right)} \cdot \left(\ln\ln \left(\frac{n}{m}\right)^{m}\right)^2\right).
\end{align*}
To simplify the right-hand side of this asymptotic inequality, it is sufficient to compute the sum of the following expressions:
$$
\frac{1}{\ln \left(\frac{n}{m}\right)^{r}} = \frac{1}{r} \cdot \frac{1}{\ln \left(\frac{n}{m}\right)}, ~~~ r = 1, \dots, m.
$$
Writing this sum as $ O\left(\frac{\ln (m+1)}{\ln \left(\frac{n}{m}\right)}\right) $ proves the theorem.

\paragraph{Remark.}
In $[1],[2]$ $M$ was constructed in a different (and incorrect) way. On each turn for each $p_j \ge \frac{n}{m}$ index of some new column (independent over $\mathbb{Z}_{p_j}$ with chosen for $p_j$ before) was added to $M$. This was possible due to the incorrect version of Lemma 4: with correct version of it we can only guarantee $
k = \frac{1}{2} \cdot \frac{\ln \left(\frac{n}{m}\right)}{\ln\ln \left(\frac{n}{m}\right)}
$ on each turn of this algorithm and thus the bound for $|M|$ becomes much weaker than required to prove Theorem $2$.

\section*{Acknowledgements}
I would like to thank A.M Raigorodskii for productive discussions about $[1],[2]$, helpful suggestions and proofreading.

\end{document}